\numberwithin{equation}{section} 
\numberwithin{figure}{section} 
  \theoremstyle{plain}
  \newtheorem{thm}{Theorem}[section]
  \theoremstyle{plain}
  \theoremstyle{plain}
  \theoremstyle{Remark}
  \theoremstyle{remark}
  \theoremstyle{plain}
  \newtheorem{lem}[thm]{Lemma}
\newcommand{\R}{\mathbb{R}}
\def\bfR#1{{\bf R}^#1}
\def\ind{\hbox{\rm ind}}
\def\com#1{ \hbox{#1}}
\def\<{{\langle }}
\def\>{{\rangle }}
\def\bfR#1{{\bf R}^#1}
\def\ind{\hbox{\rm ind}}
\def\com#1{ \quad\hbox{#1}\quad}
\def\R{\hbox{\bf R}}
\def\a{\frac{\sqrt{1+H^2}-1}{H}\, }
\def\<{{\langle }}
\def\>{{\rangle }}
\begin{document}

\title[Index jump for cmc hypersurfaces]{Stability index jump for cmc hypersurfaces of spheres}

\author{ Oscar M. Perdomo and  Aldir Brasil }

\date{\today}



\subjclass[2000]{58E12, 58E20, 53C42, 53C43}
\subjclass[2000]{58E12, 58E20, 53C42, 53C43}

\maketitle

\begin{abstract}

It is known that the totally umbilical hypersurfaces in the $n+1$-dimensional spheres are characterized as the only hypersurfaces with weak  stability index 0. That is, a compact hypersurface with constant mean curvature, cmc, in $S^{n+1}$, different from  an Euclidean sphere, must have stability index greater than or equal to $1$. In this paper we prove that the weak stability index of any non-totally umbilical compact hypersurface $M\subset S^{{n+1}}$ with cmc cannot take the values $1,2,3\dots n$. 

\end{abstract}

\section{Introduction and preliminaries}
The main reason we study minimal and cmc hypersurfaces is because they are critical points of the area functional in the minimal case and, in the cmc case, they are critical points of the area functional among those hypersurfaces  that preserve the algebraic volume enclosed by them.   If $A$ denotes the shape operator of $M$, we have that the stability operator $J$ is given by $J(f)=-\Delta(f)-|A|^2 f-nf$. We will also refer to the operator $J$ as the Jacobi operator. We define the weak stability index, denoted by $\ind_T(M)$, as the maximum dimension of a vector space $V \subset C^\infty(M)$ satisfying  

$$\int_Mf=0 \com{and} \int_MfJ(f)<0 \com{for all} f\in V $$

For a minimal hypersurface of the sphere, the Jacobi operator is defined in the same way and, the stability index, denoted by $\ind(M)$, is defined as the number of negative eigenvalues of this operator. The reason of this slightly difference relies in the fact that minimal surfaces locally minimize area while cmc surfaces locally minimize area among those variations that preserve the algebraic volume of $M$. In this way the condition $\int_Mf=0$ for the cmc case assures that we are comparing the area among hypersurfaces that locally preserve enclosed volume. 

It turns out that very important results come out from understanding the spectrum of the stability operator. For example, the final step in the proof that the only complete area minimizing hypersurfaces in $\bfR{n}$ are hyperplanes,  for $n\le 7$ ; and, the regularity of minimal hypersurfaces in any riemannian manifold with dimension less than 8 comes from the fact that Simons, \cite{S},  showed that the first eigenvalue of the stability operator of a compact minimal hypersurface in the $(n+1)$-dimensional sphere is smaller than or equal to $-2n$. The fact that the regularity statement is true in any riemannian manifold, shows why the study of the spectrum of the stability operator for hypersurfaces of spheres is, somehow, more important than the study of the spectrum of the stability operator of hypersurfaces in other spaces. Another result that uses the spectrum of the Jacobi operator of minimal surfaces of spheres, is the classification of all cmc surfaces in $S^3$ and $R^3$ given by Sterling-Pinkall in \cite{PS}.

For minimal hypersurfaces it is not difficult to show that $\ind(M)\ge n+3$ for any compact hypersurface different from an equator; the reason is that, when $H=0$, the projection of the Gauss map to a fix vector, defines an eigenfunction of $J$, \cite{S}. We say that $M\subset S^{n+1}$ is Clifford if it is the cartesian product of two Euclidean spheres. It is known that all minimal Clifford hypersurfaces have stability index $n+3$. A natural problem that remains open is the question if they are the only minimal examples with  
stability index $n+3$. For $n=2$ Urbano, \cite{U}, gave an affirmative answer and, for any dimension $n$, Perdomo \cite{P1} gave an affirmative answer among those hypersurfaces with a special kind of symmetries, in particular he proved that the conjecture is true among those hypersurfaces with antipodal symmetry. Other results among hypersurfaces that satisfy additional conditions on  $|A|^2$ can be found in \cite{B}, \cite{BGD}, \cite{P5}.

For the constant mean curvature case, the motivation for the study of the stability operator is similar to the minimal case. For example in \cite{A}, Alias and Piccione showed the existence of embedded examples using  the spectrum of the Jacobi operator in part of their arguments. It looks like a few of their embedded examples agree with those found by Perdomo in \cite{P}. 

To obtain information for the stability index for the non minimal case is  more difficult due to the fact that besides the isoparametric examples, there is not explicit formula for any of the eigenfunctions of $J$. For this reason, so far, the only general results in this direction was proven by Barbosa and Do Carmo \cite{BD}, it  states that the $ind_T(M)=0$, if and only if, $M$ is totally umbilical. Generalizations to other ambient spaces and alternative proofs of Barbosa and Do Carmo's result can be found in \cite{BD1}, \cite{M}, \cite{V}.   All other result have additional assumptions; for example,  in \cite{P2} and \cite{P3}, some estimates are found for the $\ind_T(M)$ under the fairly strong additional condition that $|A|^2$ is constant. Also in \cite{P4}, some estimates are found for hypersurfaces with antipodal symmetry.

\section{Main Theorem}

Let us start this section with the notation introduced in \cite{P1}.  We will denote $l_v:M\to \R{}$ the function given by $l_v(x)=\<\phi(x),v\>$ and by
$f_v:M\to \R{}$, the function given by $f_v(x)=\<\nu(x),v\>$, where $\nu:M\to S^{n}$ is the Gauss map. The following relations are well known 
\cite{P2}

\[
\Delta l_v = -nl_v+nH f_v, \quad  \quad
\Delta f_v = -|A|^2f_v+nH l_v.
\]

and 

$$|A|^2\ge nH^2 \com{(Follows using Cauchy Schwarz to $A$ and $I$)}$$

Before proving the main theorem, let us prove the following small lemma which is just a direct application of the divergency theorem.

\begin{lem}\label{the lemma}
Let  $M\subset S^{{n+1}}$ be a compact hypersurface with constant mean curvature $H$. If $l_v$ and $f_v$ are defined as in the beginning of this section, then, for all $u\in \bfR{{n+2}}$ we have that 

$$ \int_M|A|^2f_ul_u=n \int_Mf_ul_u-nH\int_M f_u^2+nH\int_Ml_u^2 $$

\end{lem}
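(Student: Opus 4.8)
The plan is to use nothing more than the self-adjointness of the Laplacian on the closed manifold $M$, together with the two evolution relations for $\Delta l_u$ and $\Delta f_u$ stated just before the lemma. Since $M$ is compact and without boundary, the divergence theorem gives Green's identity $\int_M g\,\Delta h = \int_M h\,\Delta g$ for all smooth $g,h$, and this is the only analytic input needed. The term $|A|^2 f_u$ on the left-hand side of the claimed identity is precisely the term that appears (up to sign) in the relation $\Delta f_u = -|A|^2 f_u + nH\, l_u$, so the strategy is to trade $|A|^2 f_u$ for a Laplacian and then move that Laplacian onto $l_u$.

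Concretely, I would first rewrite the second relation as $|A|^2 f_u = -\Delta f_u + nH\, l_u$. Multiplying by $l_u$ and integrating over $M$ yields
$$\int_M |A|^2 f_u l_u = -\int_M l_u\, \Delta f_u + nH \int_M l_u^2 .$$
Next I would apply Green's identity to the first integral on the right, replacing $\int_M l_u\,\Delta f_u$ by $\int_M f_u\,\Delta l_u$. Substituting the first relation $\Delta l_u = -n\, l_u + nH\, f_u$ then gives
$$\int_M f_u\,\Delta l_u = -n\int_M f_u l_u + nH\int_M f_u^2 .$$
Combining the last two displays, the sign flip produces $n\int_M f_u l_u - nH\int_M f_u^2$, and adding back the leftover $nH\int_M l_u^2$ reproduces exactly the asserted formula.

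I do not expect a genuine obstacle here: the computation is a two-step integration by parts, and the algebra closes on the first attempt. The only point requiring a word of justification is the vanishing of boundary contributions when passing to Green's identity, which is exactly where the compactness (closedness) of $M$ enters and which the author flags as the ``direct application of the divergence theorem.'' If one wanted to be fully explicit, one could instead integrate the pointwise identity $\Div(l_u \nabla f_u - f_u \nabla l_u) = l_u\,\Delta f_u - f_u\,\Delta l_u$ and invoke the divergence theorem to set its integral to zero, which makes the self-adjointness step transparent and completes the proof.
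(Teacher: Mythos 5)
Your proof is correct and follows essentially the same route as the paper: both substitute $|A|^2 f_u = -\Delta f_u + nH\,l_u$, move the Laplacian onto $l_u$ via self-adjointness (the divergence theorem on the closed manifold $M$), and then substitute $\Delta l_u = -n\,l_u + nH\,f_u$. Your closing remark about integrating $\Div(l_u\nabla f_u - f_u\nabla l_u)$ merely makes explicit the Green's identity step that the paper invokes implicitly.
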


\begin{proof} The lemma  follows directly from the divergency theorem as follows,

\begin{eqnarray*}
 \int_M|A|^2f_ul_u&=& \int_Ml_u(-\Delta f_u+nHl_u)\\
  &=&  -\int_Mf_u\Delta l_u + \int_MnHl_u^2\\
  &=&  \int_Mf_u(nl_u-nH f_u)+ \int_MnHl_u^2\\
\end{eqnarray*}

\end{proof}

\begin{thm}
Let  $M\subset S^{{n+1}}$ be a compact hypersurface with constant mean curvature $H$. If $M$ is not totally umbilical then $\ind_T(M)\ge n+1$
\end{thm}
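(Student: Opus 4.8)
The plan is to produce an $(n+1)$-dimensional test space $V\subset C^\infty(M)$ on which the quadratic form $Q(f)=\int_M fJ(f)$ is negative definite, after first arranging the mean-value-zero constraint $\int_M f=0$. The natural candidate functions are the coordinate functions of the Gauss map, namely the $f_v(x)=\langle \nu(x),v\rangle$ for $v\in\bfR{n+2}$, since these satisfy the Jacobi-type relation $\Delta f_v=-|A|^2 f_v+nHl_v$ recorded above. These span a space of dimension at most $n+2$, and I expect the index bound $n+1$ to come from cutting this down by exactly one dimension through the constraints.

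First I would compute $Q(f_v)=\int_M f_v J(f_v)$ explicitly. Using $J(f_v)=-\Delta f_v-|A|^2 f_v-nf_v$ together with the relation for $\Delta f_v$, one gets $J(f_v)=-nHl_v-nf_v$, so that $Q(f_v)=-nH\int_M f_v l_v - n\int_M f_v^2$. The term $\int_M f_v^2$ is strictly positive unless $f_v\equiv 0$ (which happens only for $v$ tangent in a trivial way), so the sign of $Q(f_v)$ is controlled by the cross term $\int_M f_v l_v$. This is exactly where Lemma~\ref{the lemma} should enter: it expresses $\int_M |A|^2 f_u l_u$ in terms of $\int_M f_u l_u$, $\int_M f_u^2$ and $\int_M l_u^2$, and combined with $|A|^2\ge nH^2$ it should let me bound the mixed term and show $Q$ is negative on a large subspace. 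The plan is to assemble the bilinear form $B(u,w)=\int_M f_u f_w$ and $\tilde B(u,w)=\int_M f_u l_w$ as symmetric (resp.\ possibly non-symmetric) bilinear forms on $\bfR{n+2}$ and analyze the resulting $(n+2)\times(n+2)$ matrix pencil, using the non-umbilicity of $M$ to guarantee that the negative part has dimension at least $n+1$.

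Two constraints must be handled. The volume constraint $\int_M f=0$ removes at most one dimension, which is why the expected bound drops from a potential $n+2$ to $n+1$: the space of linear combinations $f=\sum c_i f_{e_i}$ satisfying $\int_M f=0$ is an $(n+1)$-dimensional (or larger) subspace of the $(n+2)$-dimensional span. I would show that on this hyperplane $Q$ remains negative definite, again invoking Lemma~\ref{the lemma} and $|A|^2\ge nH^2$ to control the indefinite cross terms. The non-totally-umbilical hypothesis is essential precisely to prevent degeneracies: for a totally umbilical $M$ one has $|A|^2=nH^2$ with equality, the functions $f_v$ and $l_v$ become proportional, and the form $Q$ can fail to be negative, consistent with $\ind_T=0$ from Barbosa--Do~Carmo.

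The main obstacle I anticipate is controlling the indefinite cross term $\int_M f_u l_u$ and showing that the quadratic form, when restricted to the mean-zero hyperplane, is negative on a space of the full expected dimension $n+1$ rather than something smaller. The cross term has no definite sign in general, so the argument cannot proceed by a crude bound on each $f_v$ individually; instead it will require a genuinely linear-algebraic diagonalization of the pencil of forms $(B,\tilde B)$ on $\bfR{n+2}$, together with a careful use of the strict inequality $|A|^2>nH^2$ somewhere (which holds on a set of positive measure exactly because $M$ is not umbilical) to convert the non-strict estimate $|A|^2\ge nH^2$ into the strict negativity $Q(f)<0$ needed for the index count.
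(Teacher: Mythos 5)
Your reduction of $Q(f_v)=\int_M f_vJ(f_v)$ is correct: since $\Delta f_v=-|A|^2f_v+nHl_v$, the $|A|^2$ terms cancel and $Q(f_v)=-n\int_M f_v^2-nH\int_M f_vl_v$. But precisely because of that cancellation, your central mechanism collapses. Lemma \ref{the lemma} rewrites $\int_M|A|^2f_ul_u$ in terms of the three other integrals, and $Q(f_v)$ contains no $\int_M|A|^2f_vl_v$ term into which it could be substituted; using the lemma ``in reverse'' to control the cross term $\int_M f_vl_v$ would require knowing $\int_M|A|^2f_vl_v$, which you do not have unless $|A|^2$ is constant (that is exactly the extra hypothesis of \cite{P2}, \cite{P3}, which this theorem is designed to remove). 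Likewise the pointwise bound $|A|^2\ge nH^2$ can only be integrated against a nonnegative density, and $f_vl_v$ changes sign, so it gives nothing here. In fact the cross term can be computed exactly without any of this: multiplying $\Delta l_v=-nl_v+nHf_v$ by $l_v$, integrating by parts, and using $|\nabla l_v|^2=|v|^2-l_v^2-f_v^2$, one finds
$$Q(f_v)=|v|^2\,\mathrm{Vol}(M)-(n+1)\int_M\big(f_v^2+l_v^2\big).$$
For a totally umbilical (non-great) sphere and $v$ orthogonal to its axis this quantity is exactly $0$, so negativity of $Q$ on the span of the $f_v$'s alone is a borderline geometric property; nothing in your proposal converts non-umbilicity into the required strict inequality, and no diagonalization of the pencil $(B,\tilde B)$ can manufacture negativity that has not been established. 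The test space built only from the Gauss-map components is simply the wrong one when $H\neq 0$ (when $H=0$ it works trivially, which is Simons' observation, but the paper explicitly sets the minimal case aside).

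The paper's proof supplies exactly the ingredient you are missing: it uses the mixed test functions $h_u=f_u+\alpha l_u$ with the specific constant $\alpha=\big(\sqrt{1+H^2}-1\big)/H$, after disposing of the Clifford case ($\ind_T\ge n+2$ by \cite{P2}) and the minimal case separately --- case distinctions your proposal also needs but does not make. For these $h_u$ the expansion of $\int_M h_uJ(h_u)$ does contain $\int_M|A|^2f_ul_u$, with coefficient $-\alpha$, and Lemma \ref{the lemma} eliminates it; the only other $|A|^2$ term is $-\alpha^2\int_M|A|^2l_u^2$, where the density $l_u^2\ge 0$ makes the bound $|A|^2\ge nH^2$ legitimate. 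The constant $\alpha$ is chosen so that $H\alpha^2+2\alpha=H$, which makes the whole estimate collapse into a perfect square, $\int_M h_uJ(h_u)\le -n\int_M(f_u+Hl_u)^2<0$, with strictness and the dimension count $\dim V\ge n+1$ both coming from the result of \cite{P2} that $f_u$ and $l_u$ are never proportional when $M$ is neither Clifford nor totally umbilical. Your overall skeleton (an $(n+2)$-parameter family of test functions, one dimension lost to the constraint $\int_M f=0$) matches the paper, but the crucial idea --- mixing $f_u$ with $l_u$ via that particular $\alpha$ so that the lemma and the pointwise inequality genuinely apply --- is absent, and without it the argument does not close.
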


\begin{proof}
Since $\ind_T(M)\ge n+2$ when $M$ is Clifford, \cite{P2}; without loss of generality let us assume that $M$ is neither totally umbilical or Clifford. We will also assume that $M$ is not minimal. Let us consider the following subspace of $C^\infty(M)$.

$$V=\{\, h_u=f_u+\a l_u\, :\, u\in\bfR{{n+2}} \com{and} \int_Mh_u\, =\, 0\, \}$$

 In \cite{P2} it is shown that, if $M$ is not Clifford or totally umbilical, then for all non zero $v\in \bfR{{n+2}}$, no function $l_v$ is a multiple of  the function $f_v$. Therefore the dimension of $V$ is at least $n+1$.
 
 A direct computation shows that 
 
 $$\Delta h_u= -|A|^2f_u+nHl_u-n \a l_u+n H\a f_u$$
 
 Therefore,
 
 $$J(h_u)=-|A|^2 \a l_u-n f_u-nHl_u-n H \a f_u$$
 
 and
 
 \begin{eqnarray*}
 \int_M h_uJ(h_u) &=&\int_M\left(-|A|^2 \a l_uf_u-n f_u^2-nHl_uf_u-n H \a f_u^2  \right) \\
     &+&\int_M\left(-|A|^2\big(\a\big)^2 l_u^2-n \a  f_u l_u\right)\\
     &+&\int_M\left(    -nH\a l_u^2 - n H \big(\a\big)^2 f_ul_u  \right) 
 \end{eqnarray*}
Using Lemma \ref{the lemma} to change the first term on the right hand side of the equation above, and also using the inequality $|A|^2\ge nH^2$, we get

 \begin{eqnarray*}
 \int_M h_uJ(h_u) &\le&-\int_M\left(  2 n\, \a+ n H+ n H (\a)^2  \right)f_ul_u \\
     &-&\int_Mn f_u^2 \, -\,  \int_M\left( nH^2(\a)^2 +2 n H\a \right)l_u^2 \\
     &=&-n \int_M\left( 2 H f_ul_u+f_u^2+H^2l_u^2\right)\\
     &=&-n\int_M\big( f_u+Hl_u\big)^2\\
     &<&  \, 0
 \end{eqnarray*}

The strict inequality at the end of the computations above follows again by the main result in \cite{P2} since we know that $f_u+Hl_u$ cannot vanish identically because we are assuming that $M$ is neither totally umbilical nor Clifford. Since the dimension of $V$ is at least $n+1$ and $\int_M fJ(f)<0$ for all $f\in V$ we conclude that $\ind_T(M)\ge n+1$
\end{proof}
\section{Acknowledgments}
This work was started while the first author was visiting the Department of Mathematics of the Universidade Federal do Ceara in Fortaleza Brazil. He would like to thank that institution and the members of the department for their hospitality.


\begin{thebibliography}{1}

\bibitem{A} Alias, L.  Piccione P.  \emph{Bifurcation of constant mean curvature tori in Euclidean spheres}, arXiv:0905.2128v2

\bibitem{P2} Alias, Brasil, Perdomo. \emph{On the stability index of hypersurfaces with constant mean curvature hypersurfaces in spheres}, PAMS {\bf 135} No. 11, (2007), 3685-3693.

\bibitem{P3} Alias, Brasil, Perdomo. \emph{A characterization of quadric constant mean curvature hypersurfaces of spheres}, J.  Geom. Anal. {\bf 18}  (2008), 687-703.

\bibitem{P4} Alias, Brasil, Perdomo. \emph{Stable constant mean curvature hypersurfaces in the real projective space}, Manuscripta Math. {\bf 121} No. 3 (2006), 329-338.

\bibitem{BD} Barbosa, J.L., do Carmo, M. \emph{Stability of hypersurfaces with constant mean curvature}, Math Z. {\bf 185} 
 (1984) No. 3, 339-353.
 
\bibitem{BD1} Barbosa, J.L., do Carmo, M., Eschenburg \emph{Stability of hypersurfaces with constant mean curvature in Riemannian manifolds}, Math Z. {\bf 197}
 (1988) No. 1, 123-138.

\bibitem{B} Barros, A., Sousa, P.  \emph{Estimate for index of closed minimal hypersurfaces in spheres}, Kodai Mathematical Journal {\bf 32 },  (2009), 442-449.

\bibitem{BGD} Brasil, A., Delgado J. A. , Guadalupe, I. \emph{A characterization of the Clifford torus}, REnd. Circ. Ma. Palermo. (2) {\bf 48}
 (1999)  537-540.
 
 
 


\bibitem{M} Montiel, S.   \emph{Stable constant mean curvature hypersurfaces in some Riemannian manifolds}, Comment. Math. Helv. {\bf 73}
 (1998) No. 4, 584-602.

\bibitem{P} Perdomo, O. \emph{Embedded constant mean curvature hypersurfaces of spheres}, Asian J. Math. {\bf 14} (March 2010) No. 1, 73-108.

\bibitem{P1} Perdomo, O. \emph{Low index minimal hypersurfaces of spheres}, Asian J. Math. {\bf 5} (2001), 741-749.

\bibitem{P5} Perdomo, O. \emph{On the average of the scalar curvature for minimal hypersurfaces of spheres with low index}, Illinois J. Math. {\bf 48} No 2,  (2004), 559-565.

\bibitem{PS} Pinkall, U., Sterling, I. \emph{On the classification of constant mean curvature tori} Ann. of Math. {\bf 130}, No. 2 (1989), 407-451

\bibitem{S}  J. Simons, \emph{Minimal varieties in Riemannian manifolds}, Ann.
of Math. (2), {\bf 88} (1968) 62--105.

\bibitem{U} Urbano, F. \emph{Minimal surfaces with low index in the three-dimensional sphere}, Proc. Amer. Math. Soc. {\bf 108} (1990), 989--992.

\bibitem{V} Veeravalli, A. \emph{Stability of constant mean curvature  hypersurfaces in a wide class of  Riemannian manifolds}, Geom. Dedicata, DOI 10.1007/s10711-011-9638-4

\end{thebibliography}
\end{document}